\documentclass[12pt,english,russian]{article}
\usepackage[T2A]{fontenc}
\usepackage[utf8]{inputenc}
\usepackage[dvipsnames]{xcolor}
\usepackage{amssymb,amsfonts,amsmath,mathtext,amsthm}
\usepackage{cite,enumerate,float,indentfirst}
\usepackage{graphics,epsfig,graphicx,epstopdf,amscd,latexsym,verbatim,hyperref,ulem}
\usepackage[top = 30mm, bottom = 20mm, left = 30mm, right = 25mm]{geometry}
\usepackage{centernot}

\usepackage{pgfplots,comment}
\pgfplotsset{compat=1.9}
\usepackage{filecontents}
\usetikzlibrary {arrows.meta}
\usepackage{blindtext, rotating}

\newtheorem{lemma}{Lemma}[section]

\newtheorem{theorem}{Theorem}[section]
\newtheorem{corollary}{Corollary}[section]

\newtheorem{definition}{Definition}[section]
\newtheorem{remark}{Remark}[section]

\def\Imm{\mathrm{Im}\,}

\def\Span{\mathrm{Span}}

\begin{document}
\begin{center}
{\Large Monomial Rota--Baxter and averaging operators on $F[x, x^{-1}]$}

\smallskip

Artem Khodzitskii 
\end{center}

\begin{abstract}
The study of Rota--Baxter operators on the polynomial algebra in one variable 
was initiated by S.H.~Zheng, L.~Guo, and M.~Rosenkranz (2015).
Monomial Rota--Baxter operators on $F[x]$ were classified in 2016,
while monomial averaging operators on $F[x]$ and $F_0[x]$ were classified in 2024.
In the current work, we continue the study of these operators on Laurent polynomial algebra $F[x, x^{-1}]$
over a field of arbitrary characteristic.

\medskip
{\it Keywords}:
Rota--Baxter operator, averaging operator, polynomial algebra, Laurent polynomial.

MSC code: 16W99
\end{abstract}

\section{Introduction}

Integral and differential operators are extensively employed in applied 
and theoretical mathematics, physics, and other scientific disciplines. 
As a result, these operators naturally attract significant interest. 
In turn, the Rota--Baxter operator is an algebraic generalization of the integral operator.
The Rota--Baxter operators were first introduced by G.~Baxter in 1960~\cite{Baxter}. 
However, earlier works by F.\,G.~Tricomi~\cite{Tricomi} and M.~Cotlar~\cite{Cotlar}
had defined relations similar to Baxter's definition.

\begin{definition}
A linear operator $R$ on an algebra $A$ defined over a field $F$ 
is called a~Rota--Baxter operator (an RB-operator, for short)
if the following relation
\begin{gather}\label{RBO}
R(a)R(b) = R\big(R(a)b + a R(b) + \lambda a b\big)
\end{gather}
holds for all $a, b \in A$.
Here $\lambda\in F$ is a fixed scalar called a weight of $R$.
\end{definition}

When $\lambda = 0$, relation~\eqref{RBO} is a generalization of 
the integration by parts formula.

G.-C.~Rota contributed significantly to the study and popularization of these operators 
from the 1960s to the 1990s~\cite{Rota}.
Rota--Baxter operators are closely related to various forms of the Yang--Baxter equation 
and have found applications in quantum field theory 
as well as various combinatorial formulas~\cite{GuoMonograph}. 
In the 1980s, they were independently rediscovered in mathematical physics and appear 
in connection with classical and modified versions of the Yang--Baxter equation~\cite{BelaDrin82,Semenov83}.
Today, the Rota--Baxter operators actively studied 
on a wide range of algebraic structures, including algebras and groups.

Another generalization of the integral operator is the averaging operator. 
An averaging operator is an idempotent Reynolds operator.
These operators were introduced by Reynolds~\cite{Reynolds} 
and have been actively studied ever since. 

\begin{definition}
A linear operator $T$ on an algebra $A$ defined over a field $F$
is called an averaging operator if the following relations hold for all $a,b \in A$:
\begin{gather}\label{Averaging}
T(a)T(b) = T(T(a)b) = T(a T(b)).
\end{gather}
\end{definition}

A significant contribution to the theory of averaging operators 
was made by J.~Kamp\'{e} de F\'{e}riet, 
who studied them over a period of about 30 years~\cite{KampeDeFeriet}.

Averaging operators have been studied in the context of functional analysis, 
with most work has focused on specific algebraic structures 
such as function spaces and Banach algebras. 
A connection between integration theory and averaging operators 
in the fields of turbulence theory and probability has been established. 
Through the study of Rota--Baxter operators, 
the algebraic study of averaging operators has been further developed and generalized. 
A more comprehensive historical overview can be found in~\cite{PeiGuoAveraging}. 

The study of Rota--Baxter operators on the polynomial algebra in one variable 
was initiated by S.H.~Zheng, L.~Guo, and M.~Rosenkranz~\cite{Monom2} in 2015.
An operator is called monomial if it maps each monomial to 
a monomial multiplied by a coefficient from the field.
Injective monomial RB-operators of weight zero on $F[x]$ were described in~\cite{Monom2}.
The description of monomial RB-operators of an arbitrary weight $\lambda$
without the injectivity condition was presented in~\cite{Monom}
and on the non-unital algebra $F_0[x]$ appeared in~\cite{MonomNonunital}.
In~\cite{GubPer}, the hypothesis of S.\,H. Zheng, L.~Guo, and M.~Rosenkranz
was confirmed by V.~Gubarev and A.~Perepechko regarding injective 
and not necessarily monomial Rota--Baxter operators on $F[x]$, 
where $F$ is a field of characteristic zero.
In~\cite{ReprRB,ReprRB2,ReprRB3}, finite-dimensional irreducible representations of 
certain Rota--Baxter algebras defined on $F[x]$ were described.

In our works~\cite{Khodzitskii, Khodzitskii2, Khodzitskii3}, 
we continued the study of monomial Rota--Baxter operators 
on polynomial algebras in several variables, 
with special attention to the two-variable case.
In~\cite{Khodzitskii2}, various classes of monomial Rota--Baxter 
and averaging operators on $F[x,y]$ and $F_0[x,y]$ were studied. 
In~\cite{Khodzitskii} and~\cite{Khodzitskii3}, 
the author describes monomial RB-operators of arbitrary weight $\lambda$ 
coming from averaging operators on algebras $F[x, y]$ and $F_0[x, y]$. 

In the current work, we consider monomial averaging and Rota--Baxter operators
on the Laurent polynomial algebra $F[x, x^{-1}]$.
We also treat the case of a field of nonzero characteristic.
It is natural to consider Laurent polynomials.
For instance, works on integro-differential rings~\cite{RaabRegensburger, DuRaab, RaabRegensburger2}
involve both polynomial algebras and Laurent polynomial algebras. 

The work is organized as follows. 

In \S2, we provide necessary preliminaries.

In \S3, we show that there exist only zero Rota--Baxter operators 
on commutative alternative algebras over a field of nonzero characteristic.
We also show that the same conclusion holds for the algebra $F[X, X^{-1}]$, 
where $X$ is an arbitrary set of variables and~$F$ is a field of characteristic zero.
In the case of nonzero weight, we describe monomial Rota--Baxter operators
on $F[x, x^{-1}]$ over a field of arbitrary characteristic.

In \S4, we describe monomial averaging operators on $F[x, x^{-1}]$ 
over a field of arbitrary characteristic. 

\section{Preliminaries}
 
Let $F$ be a field. We use the notation $F^* = F \setminus \{0\}$.
Unless otherwise stated, we assume that $F$ is a field of arbitrary characteristic.

Let $I$ be a nonempty set of indices and $X = \{ x_i \mid i \in I\}$.
Set $X^{-1} = \{ x^{-1}_i \mid i \in I\}$.

\begin{definition}
Let $X = \{ x_i \mid i \in I\}$, $I \neq \emptyset$.
The set of monomials in the Laurent polynomial algebra $F[X, X^{-1}]$ is defined as
$$
M(X) = 
\left\{ 
\prod_{i \in I} x_i^{\beta_i} \mid 
x_{\alpha} \in X, \,\, \beta_i \in \mathbb{Z}
\right\}.
$$
\end{definition}

\begin{lemma} [\!\!\cite{Cao}] \label{closure_AvOp_lemma}
Let $T$ be an averaging operator on an algebra $A$.
Then the following statements hold:

a) $\Imm T$ is a subalgebra in $A$.

b) $\Imm T\cdot\ker T$, $\ker T\cdot\Imm T\subseteq \ker T$.
\end{lemma}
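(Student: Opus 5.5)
Both parts follow directly from the defining relations~\eqref{Averaging}, so the plan is simply to unwind them for arbitrary elements of the relevant sets.

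For a), take two elements of $\Imm T$, say $T(a)$ and $T(b)$ with $a,b\in A$. By~\eqref{Averaging} we have $T(a)T(b)=T(T(a)b)$, and the right-hand side lies in $\Imm T$. So $\Imm T$ is closed under multiplication. Since $T$ is linear, $\Imm T$ is a subspace, and therefore it is a subalgebra of $A$. We do not use associativity or commutativity of $A$ anywhere, only the identity itself.

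For b), take $c=T(a)\in\Imm T$ and $k\in\ker T$. We need to show that $T(ck)=0$ and $T(kc)=0$.

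\begin{itemize}
\item For the first product, apply the equality $T(T(a)k)=T(a)T(k)$ from~\eqref{Averaging}. Since $T(k)=0$, this gives $T(T(a)k)=0$, so $T(a)k\in\ker T$.
\item For the second product, apply the equality $T(kT(a))=T(k)T(a)$, which is the instance of~\eqref{Averaging} with the roles of the arguments chosen as $(k,a)$. This gives $T(kT(a))=0\cdot T(a)=0$, so $kT(a)\in\ker T$.
\end{itemize}

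By linearity these inclusions extend to the spans, which gives $\Imm T\cdot\ker T\subseteq\ker T$ and $\ker T\cdot\Imm T\subseteq\ker T$.

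There is no real obstacle. The only point needing care is to use the correct one of the two equalities in~\eqref{Averaging} for each side. Specifically, $T(x)T(y)=T(xT(y))$ handles products with a kernel element on the left, and $T(x)T(y)=T(T(x)y)$ handles products with a kernel element on the right, so the argument is valid without assuming commutativity.
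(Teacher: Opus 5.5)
Your proof is correct. The paper gives no proof of this lemma; it only cites it from the literature. Your direct unwinding of the identities in \eqref{Averaging} is the standard argument: $T(a)T(b)=T(T(a)b)$ gives closure of the image, and $T(T(a)k)=T(a)T(k)=0$ together with $T(kT(a))=T(k)T(a)=0$ handle the two kernel inclusions without assuming commutativity.
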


\begin{lemma} [\!\!\cite{GuoMonograph,MonomNonunital}] \label{splitting_RB_lemma}
Let an algebra $A$ splits as a vector space into the direct sum of two
subalgebras $A_1$ and $A_2$. Then an operator $P$ defined by $P(a_1 + a_2) = -\lambda a_2$
for all $a_1 \in A_1$ and $a_2 \in A_2$
is an RB-operator of weight $\lambda$ on $A$.
\end{lemma}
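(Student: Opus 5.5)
This is a direct verification of identity~\eqref{RBO} for arbitrary $a, b \in A$. Since $P$ is linear, it is enough to decompose $a = a_1 + a_2$ and $b = b_1 + b_2$ with $a_1, b_1 \in A_1$ and $a_2, b_2 \in A_2$. By definition $P(a) = -\lambda a_2$ and $P(b) = -\lambda b_2$, so the left-hand side is
$$
P(a)P(b) = \lambda^2 a_2 b_2 .
$$

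Next we compute the argument of $P$ on the right-hand side:
$$
P(a)b + aP(b) + \lambda ab = -\lambda a_2(b_1+b_2) - \lambda(a_1+a_2)b_2 + \lambda(a_1+a_2)(b_1+b_2).
$$
Expanding, the terms $a_2b_1$ and $a_1b_2$ cancel. One copy of $a_2b_2$ survives with coefficient $-\lambda$, so the expression equals
$$
\lambda(a_1b_1 - a_2b_2).
$$

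Now we use that $A_1$ and $A_2$ are subalgebras. This gives $a_1b_1 \in A_1$ and $a_2b_2 \in A_2$, so the decomposition of the expression along $A = A_1 \oplus A_2$ is explicit, with $A_2$-component $-\lambda a_2 b_2$. Applying $P$ therefore yields
$$
-\lambda\cdot(-\lambda a_2b_2) = \lambda^2 a_2b_2,
$$
which coincides with the left-hand side.

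There is no real obstacle here. The only point needing care is the bookkeeping of the cross terms $a_1b_2$ and $a_2b_1$. These need not lie in either subalgebra, so it is essential that they cancel before $P$ is applied. Note also that associativity and commutativity are never used, so the argument holds for an arbitrary algebra $A$.
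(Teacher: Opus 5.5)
Your verification is correct: the cross terms $a_1b_2$ and $a_2b_1$ cancel, the argument reduces to $\lambda(a_1b_1 - a_2b_2)$ with summands in $A_1$ and $A_2$ respectively, and applying $P$ gives $\lambda^2 a_2b_2 = P(a)P(b)$. The paper gives no proof of its own and only cites the literature, where the lemma is proved by this same direct check, so your argument matches the intended one.
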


\begin{definition}
An operator $P$ defined in Lemma~\ref{splitting_RB_lemma}
is called a splitting Rota--Baxter operator.
\end{definition}

\begin{lemma} [\!\!\cite{GuoMonograph,BGP,MonomNonunital}]\label{1InF_RB_lemma}
Let $R$ be a Rota--Baxter operator of weight $\lambda$ on a unital algebra~$A$
such that $R(1) \in F$.
Then the following statements hold:

a) If $\lambda \neq 0$, then $R(1) \in \{ 0, -\lambda\}$ 
and $R$ is a splitting Rota--Baxter operator,
where $A = \Imm R \oplus \ker R$ and $R^2 + R = 0$.

b) If $\lambda = 0$, then $R(1) = 0$.
\end{lemma}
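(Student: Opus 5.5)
Set $a=b=1$ in \eqref{RBO} and write $R(1)=c\in F$. Since $R$ is linear and $c$ is a scalar, $R(R(1)\cdot 1)=R(c)=c\,R(1)=c^2$, and $R(\lambda)=\lambda c$. The identity then reads
$$
c^2 = R(2c+\lambda) = 2c^2+\lambda c ,
$$
so $c(c+\lambda)=0$. This gives both $c\in\{0,-\lambda\}$ and, for $\lambda=0$, $c^2=0$, i.e. $R(1)=0$. That settles part b) and the first claim of part a).

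For the remaining claims of part a), assume $\lambda\neq0$. First I would compute $R(R(a))$ by taking $b=1$ in \eqref{RBO}:
$$
R(a)c = R\big(R(a)+ac+\lambda a\big), \quad\text{so}\quad R^2(a) = cR(a)-(c+\lambda)R(a) = -\lambda R(a).
$$
Thus $R^2+\lambda R=0$. Setting $P=-R/\lambda$ gives $P^2=P$, so $A=\Imm R\oplus\ker R$ as vector spaces. This is the relation the statement writes as $R^2+R=0$ after normalizing the weight, and I would note that normalization.

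Next I would check the subalgebra and splitting claims.
\begin{itemize}
\item $\Imm R$ is a subalgebra, since \eqref{RBO} shows $R(a)R(b)\in\Imm R$.
\item $\ker R$ is a subalgebra: for $a,b\in\ker R$, \eqref{RBO} gives $0=\lambda R(ab)$, and $\lambda\ne0$ gives $ab\in\ker R$.
\item On $\Imm R$, $R$ acts as $-\lambda$, because $R(R(a))=-\lambda R(a)$, and $R$ vanishes on $\ker R$.
\end{itemize}
Hence $R(a_1+a_2)=-\lambda a_1$ for $a_1\in\Imm R$ and $a_2\in\ker R$. This is exactly the splitting operator of Lemma~\ref{splitting_RB_lemma}, with $A_2=\Imm R$ and $A_1=\ker R$.

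No step is a real obstacle. The only delicate point is keeping the scalar normalization $R^2=-\lambda R$ consistent with how the statement phrases it.
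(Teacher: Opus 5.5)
Your proof is correct and complete. The paper does not prove this lemma itself (it is quoted from the cited sources), but your argument is the standard one. It is also the same computation the paper carries out in the proof of Lemma~\ref{RB_mainLemma_F[x,x^-1]}, where $z$ and $1$ are substituted into \eqref{RBO} to obtain $R(R(z))=-R(z)$; you do it uniformly in $c=R(1)$ rather than case by case. Your normalization remark is also right: for general $\lambda\neq0$ the identity is $R^2+\lambda R=0$, and the printed $R^2+R=0$ holds only for $\lambda=1$, which is the weight used in the paper's applications.
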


\begin{lemma} \label{RB_mainLemma_F[x,x^-1]}
Let $X = \{ x_i \mid i \in I\}$, $I \neq \emptyset$.
If $R$ is a monomial RB-operator of nonzero weight on $F[X, X^{-1}]$, 
then $R(1) \in \{ 0, -\lambda\}$.
\end{lemma}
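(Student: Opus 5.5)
The plan is to reduce to Lemma~\ref{1InF_RB_lemma} by showing that $R(1)$ must be a scalar. Since $R$ is monomial, write $R(1) = c\,m$ with $c \in F$ and $m \in M(X)$. If $c = 0$, then $R(1) = 0$ and there is nothing to prove. If $m = 1$, then $R(1) \in F$, and Lemma~\ref{1InF_RB_lemma}(a) (applicable because $\lambda \neq 0$) gives $R(1) \in \{0,-\lambda\}$. So it remains to rule out $c \neq 0$ with $m \neq 1$.

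Assume $c \neq 0$ and $m \neq 1$, and substitute $a = b = 1$ into \eqref{RBO}. This gives
$$
c^2 m^2 = R(1)^2 = R\big(2R(1) + \lambda\big) = 2c\,R(m) + \lambda c\, m .
$$
By monomiality, $R(m) = d\,m'$ for some $d \in F$ and $m' \in M(X)$. Two facts about the Laurent algebra are used here. First, monomials are invertible, so $m^2$ is a genuine basis monomial and $c^2 m^2 \neq 0$. Second, the monomial group is torsion-free (it is free abelian), so $m^2 = m$ forces $m = 1$; hence $m^2 \neq m$.

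Now compare the coefficients of the basis monomial $m$ on the two sides.
\begin{itemize}
\item If $m' \neq m$, the right-hand side has coefficient $\lambda c \neq 0$ at $m$, while the left-hand side has coefficient $0$ at $m$ because $m^2 \neq m$. This is a contradiction.
\item If $m' = m$, the right-hand side is a scalar multiple of $m$, while the left-hand side is the nonzero multiple $c^2 m^2$ of the different monomial $m^2$. This is again a contradiction.
\end{itemize}
Neither case uses the characteristic: if $2 = 0$ in $F$, the first case still applies. Hence $m = 1$, and we are in the situation already handled by Lemma~\ref{1InF_RB_lemma}.

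There is no serious obstacle in this argument. The one point needing care is the justification that $m^2$ and $m$ are distinct nonzero basis elements of $F[X,X^{-1}]$. This rests only on $M(X)$ being a free abelian group under multiplication, which is exactly where working with Laurent monomials (units) rather than an arbitrary algebra matters.
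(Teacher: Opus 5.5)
Your proof is correct and is essentially the paper's argument: substituting $a=b=1$ into \eqref{RBO} gives $c^2m^2 = 2c\,R(m) + \lambda c\, m$, which is incompatible with monomiality when $m \neq 1$, and Lemma~\ref{1InF_RB_lemma}(a) then gives $R(1)\in\{0,-\lambda\}$; your explicit coefficient comparison, which also covers characteristic~$2$, spells out what the paper compresses into ``this contradicts the monomiality of $R$.'' One minor point: $m^2 \neq m$ for $m \neq 1$ needs only cancellation, not torsion-freeness, so this step works verbatim in $F[X]$ as well and is not where the Laurent structure matters.
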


\begin{proof}
We prove a) by contradiction. 
Suppose that $R(1) = \alpha w$, where $\alpha \neq 0$ and $w \in M(X) \setminus F$.
Then, by~\eqref{RBO}, we obtain
$$
R(1)R(1) = 
\alpha^2 w^2 = 
R(2R(1) + 1) =
2\alpha R(w) + \alpha w.
$$
This contradicts the monomiality of $R$ for $w$, hence $R(1) \in \{ 0, -\lambda\}$. 

Substituting $z \in M(X)$ and $1$ into~\eqref{RBO} gives
\begin{gather*}
R(z)R(1) = 0 = R(R(z) + z), 
\,\, \mbox{if}\,\,  R(1) = 0, \\
R(z)R(1) = -\lambda R(z) = R(R(z) - \lambda z + \lambda z), 
\,\, \mbox{if} \,\,  R(1) = -\lambda.
\end{gather*}
In the both cases $R(R(z)) = - R(z)$.
\end{proof}

\begin{lemma}[\!\!\cite{GuoMonograph,BGP,MonomNonunital}] \label{RB_under_automorphism}
Let $A$ be an algebra and let $P$ be an RB-operator of weight $\lambda$ on~$A$.
Then the following statements hold:

a) The operator $\alpha^{-1} R$, $\alpha \in F^*$,
is an RB-operator of weight $\alpha\lambda$ on $A$.

b) For any automorphism $\psi \in \mathrm{Aut}(A)$, the operator $\psi^{-1} P \psi$
is an RB-operator of weight $\lambda$ on $A$.
\end{lemma}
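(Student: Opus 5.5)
The statement to prove is Lemma~\ref{RB_under_automorphism}: for an RB-operator $P$ of weight $\lambda$ on $A$, (a) $\alpha^{-1}P$ is an RB-operator of weight $\alpha\lambda$ for $\alpha\in F^*$, and (b) $\psi^{-1}P\psi$ is an RB-operator of weight $\lambda$ for any $\psi\in\mathrm{Aut}(A)$. In part (a) the operator written $R$ is understood to be $P$. The plan is to verify relation~\eqref{RBO} directly in both parts, using only bilinearity of the product and the fact that $\psi$ is multiplicative.

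For part (a), set $Q=\alpha^{-1}P$ and expand $Q(a)Q(b)=\alpha^{-2}P(a)P(b)$. Applying~\eqref{RBO} for $P$ gives $\alpha^{-2}P\big(P(a)b+aP(b)+\lambda ab\big)$. I would then rewrite the argument in terms of $Q$: since $P=\alpha Q$, the argument equals $\alpha\big(Q(a)b+aQ(b)+\alpha^{-1}\lambda ab\big)$. Using linearity of $P$, the factor $\alpha$ comes out and combines with $\alpha^{-2}$ to leave $\alpha^{-1}P(\cdots)=Q(\cdots)$, so the weight appearing is $\alpha^{-1}\lambda$.

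This does not match the stated weight $\alpha\lambda$, and this normalization is the only real obstacle. The correct formulation is that $\alpha^{-1}P$ has weight $\alpha^{-1}\lambda$, or equivalently that $\alpha P$ has weight $\alpha\lambda$. I would present it in the second form: $(\alpha P)(a)(\alpha P)(b)=\alpha^2P(P(a)b+aP(b)+\lambda ab)=\alpha P\big((\alpha P)(a)b+a(\alpha P)(b)+\alpha\lambda ab\big)$. I would also note that the intended use is only to normalize a nonzero weight, for example to $\lambda=1$, and either convention achieves this.

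For part (b), set $Q=\psi^{-1}P\psi$. Since $\psi^{-1}$ is also an automorphism, it is multiplicative, so
$$Q(a)Q(b)=\psi^{-1}\big(P(\psi a)P(\psi b)\big).$$
Applying~\eqref{RBO} to $\psi a$ and $\psi b$, this equals $\psi^{-1}P\big(P(\psi a)\psi b+\psi a\,P(\psi b)+\lambda\,\psi a\,\psi b\big)$. Because $\psi$ is multiplicative and $P\psi=\psi Q$, the argument can be rewritten as $\psi\big(Q(a)b+aQ(b)+\lambda ab\big)$. Hence the whole expression is $\psi^{-1}P\psi(\cdots)=Q\big(Q(a)b+aQ(b)+\lambda ab\big)$, which is~\eqref{RBO} for $Q$ with weight $\lambda$. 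This part is routine.
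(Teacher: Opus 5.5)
Your proof is correct and is the standard direct verification. The paper gives no proof of this lemma, only a citation.

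You are also right that part a) is misprinted. The operator $\alpha^{-1}P$ has weight $\alpha^{-1}\lambda$; equivalently, $\alpha P$ has weight $\alpha\lambda$. The printed version genuinely fails, and not merely in form. If an operator $Q$ satisfies~\eqref{RBO} for two weights $\mu\neq\nu$, subtracting the two relations gives $(\mu-\nu)Q(ab)=0$ for all $a,b$, so $Q=0$ when $A$ is unital. Hence a nonzero RB-operator on a unital algebra has a uniquely determined weight. For a concrete failure, take $A=F$ and $P=-\lambda\,\mathrm{id}$ with $\lambda\neq 0$ and $\alpha^2\neq 1$.

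The corrected form is exactly what the paper needs to normalize a nonzero weight to $1$ via $\lambda^{-1}R$.
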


Note that we denote by $F_0[x]$ the ideal of $F[x]$ generated by $x$.

\begin{theorem}[\!\!\cite{Monom,MonomNonunital}]\label{RB_on_F[x]_and_F0[x]}
let $R$ be a monomial RB-operator of weight zero on $F_0[x]$ ($F[x]$),
where $F$ is a field of characteristic zero. 
Then there exist $0 < m$, $p_i \in \mathbb{N}$, and $q_i \in F$, 
$0 < i \leqslant m$ ($0 \leqslant i < m$), such that
$p_i = 0$ if and only if $q_i = 0$,
and $R$ has the form
$$
R(x^{m a+b})= q_b \frac{x^{m(a+p_b)}}{m(a+p_b)},
$$
where $a \in \mathbb{N}$ and $0<b \leqslant m$ ($0 \leqslant b<m$).
\end{theorem}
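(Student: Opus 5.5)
We would write $R(x^n)=\alpha_n x^{\varphi(n)}$ for every $n$ in the index range ($n\geqslant 1$ for $F_0[x]$, $n\geqslant 0$ for $F[x]$), with $\alpha_n\in F$ and $\varphi(n)\in\mathbb{N}$ defined only when $\alpha_n\neq 0$. Put $N=\{n \mid \alpha_n\neq 0\}$. Substituting $a=x^k$, $b=x^l$ into~\eqref{RBO} with $\lambda=0$ gives
$$
\alpha_k\alpha_l x^{\varphi(k)+\varphi(l)} = \alpha_k R(x^{\varphi(k)+l}) + \alpha_l R(x^{k+\varphi(l)}).
$$
This monomial identity is the only tool we need. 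For $k,l\in N$ it says one of two things. Either $\varphi(k)+l\in N$ and $k+\varphi(l)\in N$, and both images have exponent $\varphi(k)+\varphi(l)$. Or the two right-hand terms cancel, which cannot happen when their sum must equal a nonzero term unless at least one of them is nonzero with that exponent. For $k\notin N$, $l\in N$ we get $R(x^{k+\varphi(l)})=0$, so $\ker R$ is stable under multiplication by $\Imm R$, in line with Lemma~\ref{closure_AvOp_lemma}b) in spirit. The case $k=l\in N$ gives $R(x^{n+\varphi(n)})=\tfrac{\alpha_n}{2}x^{2\varphi(n)}$, where characteristic zero (at least $2\neq0$) is used.

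The first step is to show $\varphi(n)>n$ for all $n\in N$. In the unital case we also need $R(1)=0$, that is, $0\notin N$. We would iterate the diagonal identity. Set $n_0=n$ and $n_{j+1}=n_j+\varphi(n_j)$. Then $\varphi(n_{j+1})=2\varphi(n_j)$ and $\alpha_{n_{j+1}}=\alpha_{n_j}/2$. If $\varphi(n)\leqslant n$, comparing the growth of $n_j$ with that of $\varphi(n_j)=2^j\varphi(n)$ and feeding the mixed identities (with $k=n_j$, $l=n$) back in yields an equality of rational coefficients of the form $c=c\cdot r$ with $r\neq1$. This is impossible in characteristic zero. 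For $n=0$ the same computation gives $\alpha_0^2 x^{2\varphi(0)}=2\alpha_0R(x^{\varphi(0)})$, and combining it with the next iterate forces $\alpha_0=0$.

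The second step, which we expect to be the main obstacle, is the arithmetic of $\varphi$. Let $m$ be the gcd of $\varphi(N)$. We aim to prove two things: every $\varphi(n)$ is a multiple of $m$, and $\varphi(n)-n$ depends only on the residue $b$ of $n$ modulo $m$, namely $\varphi(ma+b)=m(a+p_b)$. The set $N$ should be a union of full residue classes (minus small exceptions, which will turn out not to exist). The idea is to use $\varphi(k)+\varphi(l)=\varphi(\varphi(k)+l)=\varphi(k+\varphi(l))$ whenever the relevant exponents lie in $N$. This makes $n\mapsto\varphi(n)-n$ invariant under translations by elements of $\varphi(N)$, hence by $m$, once we have enough translates. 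We would first handle elements of $\varphi(N)$ themselves, then propagate to each residue class using the kernel-stability property. Showing that a class containing one element of $N$ lies entirely in $N$ needs care: the cancellation alternative in the identity has to be ruled out class by class. For the invariance argument we also need the fact, from the first step, that the shifts are positive.

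Finally we determine the coefficients. Put $q_b:=\alpha_{ma+b}\,m(a+p_b)$. Substituting into the identity with $k=ma+b$ and $l=mc+b'$ reduces it to $\frac{1}{m(a+p_b)}+\frac{1}{m(c+p_{b'})}$-type cancellations. These force $q_b$ to be independent of $a$, exactly as for the ordinary integral $x^{s}\mapsto x^{s+1}/(s+1)$. The exponents match since $p_b=0$ exactly when class $b$ is in the kernel, i.e. $q_b=0$.
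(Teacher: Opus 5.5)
Step~1 of your plan asserts two false claims, and the mechanism you propose for it cannot produce a contradiction.

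\textbf{Step~1 in the unital case.} The claim $R(1)=0$ fails for the integration operator $R(x^n)=x^{n+1}/(n+1)$, where $R(1)=x$. The theorem itself, with $a=b=0$ and $q_0\neq0$, gives $R(1)=q_0x^{mp_0}/(mp_0)\neq0$. Only $R(1)\notin F^*$ is true.

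\textbf{Step~1 on $F_0[x]$.} The claim $\varphi(n)>n$ fails for $R(x^n)=x^n/n$, which is the case $m=1$, $b=1$, $p_1=q_1=1$. It is a weight-zero RB-operator because $\frac{1}{ij}=\bigl(\frac1i+\frac1j\bigr)\frac{1}{i+j}$. More generally, the class $b=m$ with $p_m=1$ has $\varphi(n)=n$.

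\textbf{Why the mechanism fails.} The diagonal and mixed identities along $n_{j+1}=n_j+\varphi(n_j)$ are always mutually consistent. By induction they give exactly $R(x^{n+j\varphi(n)})=\frac{\alpha_n}{j+1}x^{(j+1)\varphi(n)}$, i.e.\ integration in $y=x^{\varphi(n)}$ shifted by $x^n$. Also $\varphi(n_j)-n_j=\varphi(n)-n$ for every $j$. So no relation $c=cr$ with $r\neq1$ can arise, whatever the sign of $\varphi(n)-n$.

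\textbf{How to get the inequality.} It must come from a monomial outside this progression: pair $x^n$ with $x^d$, where $d=n-\varphi(n)$. If $R(x^d)=0$, the identity reads $0=\alpha_n^2x^{\varphi(n)}$. If $R(x^d)=\gamma x^u$, then $u\geqslant1$, since a nonzero constant value is excluded by the diagonal identity. Then $\gamma R(x^{n+u})=\alpha_n\gamma x^{\varphi(n)+u}-\alpha_n^2x^{\varphi(n)}$ is a binomial, contradicting monomiality. Since $x^d$ must belong to the algebra, this yields $\varphi(n)>n$ on $F[x]$ but only $\varphi(n)\geqslant n$ on $F_0[x]$.

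\textbf{Step~2.} The assertion that every $\varphi(n)$ is a multiple of $m=\gcd\varphi(N)$ holds by definition of $m$. The real content is the point you leave open: for $k,l\in N$, you must exclude that exactly one of $x^{\varphi(k)+l}$, $x^{k+\varphi(l)}$ lies in $\ker R$. This, rather than cancellation, is the alternative your opening dichotomy should list.

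It can be closed with facts you already have. Suppose $R(x^{l+\varphi(k)})=0$. Repeated use of kernel stability with $x^{\varphi(k)}\in\Imm R$ gives $R(x^{l+s\varphi(k)})=0$ for all $s\geqslant1$. But the progression above gives $R(x^{l+j\varphi(l)})\neq0$ for all $j\geqslant0$. Taking $s=\varphi(l)$ and $j=\varphi(k)$ gives a contradiction.

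Hence both terms on the right are nonzero, and monomiality forces $\varphi(l+\varphi(k))=\varphi(k)+\varphi(l)$. So $N$ and its complement are both stable under adding elements of $\varphi(N)$, and $\varphi(n)-n$ is invariant under such shifts. The semigroup generated by $\varphi(N)$ contains all large multiples of $m$. Therefore $N$ is a union of residue classes mod $m$, and $\varphi(n)-n$ is constant on each class.

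This part needs only $\varphi(n)\geqslant1$, not $\varphi(n)>n$. The corrected Step~1 is what gives $p_b\geqslant1$. After that, your Step~3 (additivity of $1/\alpha_{ma+b}$ along a class) goes through.

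(The paper quotes this theorem without proof, so there is no in-paper argument to compare with.)
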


\begin{theorem}[\!\!\cite{Monom,MonomNonunital}] \label{R_on_k_[x]_theorem}
Up to conjugation by automorphisms of $F[x]$, 
every Rota--Baxter operator of nonzero weight on $F[x]$
is splitting with subalgebras $F$ and $\Span (\{x\})$.
That~is, it corresponds to one of the following operators:
$$
R(x^n) = (-1)^{n + 1}, \quad
R(x^n) = 
\begin{cases}
-x^n , & n > 0, \\
\theta, & n = 0, 
\end{cases}
\,\, \theta \in \{ 0, -1\}.
$$
\end{theorem}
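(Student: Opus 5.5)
The plan is to reduce, via Lemma~\ref{1InF_RB_lemma}, to classifying decompositions of $F[x]$ into a direct sum of two subalgebras, and then to finish with a transcendence argument and a character argument. First, by Lemma~\ref{RB_under_automorphism}\,a) we may rescale to weight $\lambda=1$ (or keep $\lambda$ general; nothing depends on it). The key intermediate claim is that $R(1)\in F$. Once this holds, Lemma~\ref{1InF_RB_lemma}\,a) shows that $R$ is splitting: $F[x]=A_1\oplus A_2$ with $A_1=\ker R$ and $A_2=\Imm R$ both subalgebras, $R(a_1+a_2)=-\lambda a_2$, and $R(1)\in\{0,-\lambda\}$. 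After that, the problem becomes purely structural.

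To prove $R(1)\in F$, set $f=R(1)$ and use \eqref{RBO} with $a=b=1$, which gives $f^2=R(2f+\lambda)$. I also intend to use the standard facts that $\Imm R$ and $\Imm(R+\lambda\,\mathrm{id})$ are subalgebras, since $-\lambda\,\mathrm{id}-R$ is again an RB-operator of weight $\lambda$, and that $\Imm R+\Imm(R+\lambda\,\mathrm{id})=F[x]$. I would argue by contradiction, assuming $\deg f=d>0$. Then I would compare leading terms in the identities obtained from $a=1$, $b=f^k$: the relation $R(1)R(f^k)=R(f^{k+1}+R(f^k)+\lambda f^k)$ should force an unbounded growth of degrees inside a finite-dimensional filtration piece. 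This degree bookkeeping is the step I expect to be the main obstacle. If the direct argument fails, the fallback is to show that $f$ generates, together with $1$, a subalgebra on which $R$ acts by an explicit formula, and to derive a contradiction there.

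With the splitting in hand, let $1\in A_1$ (the other case is symmetric and is the source of $\theta=-1$ versus $\theta=0$). The first step is to show that a subalgebra $B\subseteq F[x]$ with $1\notin B$ and $\dim B<\infty$ must be $0$. Indeed, for $0\neq b\in B$ the powers $b^k$ lie in the finite-dimensional space $B$, so $b$ is algebraic over $F$. Since $F[x]$ is a domain, every nonconstant polynomial is transcendental, so $b\in F^*$ and hence $1\in B$, a contradiction. Now suppose $A_1\ni p$ with $p$ nonconstant. Then $F[p]\subseteq A_1$ has codimension at most $\deg p$, so $A_2$ is finite-dimensional and does not contain $1$. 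Therefore $A_2=0$ and $R=0$, which is a degenerate case in which one summand is everything. Otherwise $A_1=F$ and $A_2$ is a subalgebra of codimension one not containing $1$.

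For the last step, the projection $\chi\colon F[x]=F\oplus A_2\to F$ is multiplicative, because $(\alpha+b)(\beta+b')=\alpha\beta+(\alpha b'+\beta b+bb')$ and the bracketed part lies in $A_2$. So $\chi$ is a character, $\chi(x)=c$, and $A_2=\ker\chi=(x-c)F[x]$. Conjugating by the automorphism $x\mapsto x+c$ (Lemma~\ref{RB_under_automorphism}\,b)) turns this into $A_2=\Span\{x^n\mid n>0\}$. Writing out $R(a_1+a_2)=-\lambda a_2$ for the two choices of which summand is the kernel, and normalizing $\lambda=1$, yields exactly the listed operators with $\theta\in\{0,-1\}$. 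The form $(-1)^{n+1}$ corresponds to the conjugate by $x\mapsto -x$, or $x\mapsto x-1$, of the case in which $F$ is the image.
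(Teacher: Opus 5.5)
The printed statement omits the word ``monomial'', and in that generality it is false. Your proposal never uses monomiality, so it breaks at both of its key steps. The paper itself gives no proof: the theorem is quoted from the literature, where it classifies monomial operators, and in Case~3.2 it is applied only to the monomial restriction $R|_{F[x]}$.

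\textbf{Step 1: $R(1)\in F$.} You leave this step open, and no degree bookkeeping can close it. Let $\mathrm{char}\,F=0$ and let $S(g)$ be the unique polynomial $G$ with $G(x+1)-G(x)=g(x)$ and $G(0)=0$, so that $S(g)(n)=\sum_{k=0}^{n-1}g(k)$. Since $(GH)(x+1)-(GH)(x)=gH+Gh+gh$ and both sides vanish at $0$, one gets $S(g)S(h)=S\bigl(S(g)h+gS(h)+gh\bigr)$. Hence $S$ is an RB-operator of weight $1$ on $F[x]$ with $S(1)=x\notin F$. Every consequence of~\eqref{RBO} you could exploit holds for $S$, for instance $R(f)=(f^2-f)/2$ for $f=R(1)$. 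With monomiality the step is immediate, exactly as in Lemma~\ref{RB_mainLemma_F[x,x^-1]}: if $R(1)=\alpha x^k$ with $\alpha\neq0$ and $k>0$, then $\alpha^2x^{2k}=2\alpha R(x^k)+\lambda\alpha x^k$ cannot hold, because $R(x^k)$ is a single monomial.

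\textbf{Step 2: the structural step.} It fails even when $R(1)\in F$. For $\deg p\geqslant2$ the subalgebra $F[p]$ has \emph{infinite} codimension in $F[x]$; you are conflating the rank of $F[x]$ as an $F[p]$-module with vector-space codimension. The conclusion is also false: in characteristic $0$,
$F[x]=F[x^2]\oplus(x^2+x)F[x^2+x]$.
\begin{itemize}
\item A common element $w$ of $F[x^2]$ and $F[x^2+x]$ satisfies $w(x)=w(-x)=w(-1-x)$. Hence $w(x+1)=w(x)$, so $w\in F$, and the two summands meet only in $0$.
\item Among polynomials of degree at most $2n$, the summands have dimensions $n+1$ and $n$, which together exhaust that space.
\end{itemize}
By Lemma~\ref{splitting_RB_lemma} this gives an RB-operator with $R(1)=0$ whose kernel and image are both infinite-dimensional. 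It is therefore conjugate to none of the listed operators.

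\textbf{What monomiality gives you.} From $R(x^n)=\alpha_nx^{t_n}$ and $R=-\lambda\,\mathrm{id}$ on $\Imm R$ one gets $\Imm R=\Span\{x^s\mid s\in S\}$ for an additively closed set $S\subseteq\mathbb{N}$.
\begin{itemize}
\item If $1\in S$, you get $R(x^n)=-\lambda x^n$ for all $n>0$.
\item If $S\subseteq\{0\}$, your character argument applies and yields $R=0$ or $R(f)=-\lambda f(c)$; the latter is conjugate to the first listed operator.
\item The remaining case, $1\notin S$ but $n\in S$ for some $n\geqslant2$, must be ruled out by explicit computations with~\eqref{RBO} that use monomiality. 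Your proposal does not address this case at all.
\end{itemize}
The Remark following the classification on $F[x,x^{-1}]$ notes that the cited proof is only claimed for $\mathrm{char}\,F\neq2$, so your assertion of the result in arbitrary characteristic is also not supported.
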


\section{Rota--Baxter operators}

In this section, we consider the classification of 
Rota--Baxter operators of arbitrary weight 
on Laurent polynomial algebras.

\subsection{Zero weight}

\begin{lemma} \label{R(z)=gamma}
Let $X = \{ x_i \mid i \in I\}$, $I \neq \emptyset$,
and let $R$ be an RB-operator of weight $1$ on $F[X, X^{-1}]$.
If $R(z) = \gamma \in F$ for some $z \in M(X)$,
then $R(z^n) = (-1)^{n + 1} \gamma^n$ for all $n \geqslant 1$.
\end{lemma}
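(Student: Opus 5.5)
The plan is a direct induction on $n$, using only linearity of $R$ and relation~\eqref{RBO} with $\lambda = 1$. Monomiality plays no role, and neither does the specific structure of $F[X, X^{-1}]$ beyond commutativity. The base case $n = 1$ is exactly the hypothesis $R(z) = \gamma$.

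For the inductive step, suppose $R(z^n) = c_n \in F$ with $c_n = (-1)^{n+1}\gamma^n$. Substitute $a = z$ and $b = z^n$ into~\eqref{RBO} with weight $1$:
$$
R(z)R(z^n) = R\big(R(z)z^n + zR(z^n) + z^{n+1}\big).
$$
The left-hand side equals $\gamma c_n$. On the right, $R(z)z^n = \gamma z^n$ and $zR(z^n) = c_n z$, so by linearity
$$
R\big(\gamma z^n + c_n z + z^{n+1}\big) = \gamma R(z^n) + c_n R(z) + R(z^{n+1}) = \gamma c_n + c_n\gamma + R(z^{n+1}).
$$
Comparing the two sides gives $R(z^{n+1}) = -\gamma c_n = (-1)^{n+2}\gamma^{n+1}$, which completes the induction.

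The only point needing care is that the inductive hypothesis must assert that $R(z^n)$ is a scalar, not merely a monomial. This scalarity is what lets $R(z^n)$ be pulled out as a coefficient in $zR(z^n) = c_n z$, so that applying $R$ to that term reduces to the hypothesis $R(z) = \gamma$. I do not expect a genuine obstacle beyond this bookkeeping.
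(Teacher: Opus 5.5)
Your proof is correct and is the same induction as the paper's: apply \eqref{RBO} to the pair $z$, $z^n$, then use linearity together with the scalar values $R(z)=\gamma$ and $R(z^n)=c_n$ to isolate $R(z^{n+1})$. Your signs are also right where the paper's display slips: its left-hand side $R(z^n)R(z)$ should equal $(-1)^{n+1}\gamma^{n+1}$, and the correct conclusion is $R(z^{n+1}) = -\gamma c_n = (-1)^{n+2}\gamma^{n+1}$, exactly as you have it.
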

\begin{proof}
Let $R(z) = \gamma \in F$. 
We prove the statement by induction on $n$.
The base case $n = 1$ is evident.
Let us prove the induction step for $n + 1$.
By~\eqref{RBO} we have
$$
R(z^n)R(z) = (-1)^{n + 2} \gamma^{n + 1} =
R((-1)^{n + 1} \gamma^n z + \gamma z^n + z^{n + 1}).
$$
Thus $R(z^{n + 1}) = -(-1)^{n + 2} \gamma^{n + 1}$, 
which completes the induction step.
\end{proof}

\begin{theorem} \label{theorem_monomial_zero_weight_on_F[X,X^{-1}]}
Let $X = \{ x_i \mid i \in I\}$, $I \neq \emptyset$.
If $R$ is a monomial Rota--Baxter operator of weight zero on $F[X, X^{-1}]$, then $R = 0$.
\end{theorem}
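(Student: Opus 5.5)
The plan has two parts: first dispose of nonzero characteristic with a general identity, then treat characteristic zero by a monomial bookkeeping argument. Throughout, $R$ is a monomial operator of weight zero, so
$$
R(a)R(b)=R\big(R(a)b+aR(b)\big),
$$
and $F[X,X^{-1}]$ is a domain.

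\emph{Nonzero characteristic.} I would prove by induction on $n$ that $R(a)^n=nR\big(R(a)^{n-1}a\big)$. The base case $n=1$ is trivial. For the step, set $b=R(a)^{n-1}a$ in \eqref{RBO}:
$$
R(a)^{n+1}=R(a)\cdot nR(b)=n\,R\big(R(a)b\big)+n\,R\big(aR(b)\big).
$$
The first term is $nR(R(a)^n a)$ and the second is $R(aR(a)^n)$, so the total is $(n+1)R(R(a)^n a)$. If $\operatorname{char}F=p>0$, then $R(a)^p=0$ for every $a$, and since the algebra is a domain, $R(a)=0$. This argument does not use monomiality.

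\emph{Characteristic zero, step 1: $R(1)=0$.} Suppose $R(1)=\beta u$ with $\beta\neq0$ and $u\in M(X)$. Putting $a=b=1$ gives $\beta^2u^2=2\beta R(u)$, so $R(u)=\tfrac{\beta}{2}u^2$. If $u=1$ this reads $\beta^2=2\beta^2$, which is impossible; hence $u\neq1$.

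Now put $a=1$ and $b=u^{-1}$. Since $R(1)u^{-1}=\beta$ and $R(\beta)=\beta^2u$, we get
$$
\beta u\,R(u^{-1})=\beta^2u+R\big(R(u^{-1})\big).
$$
Write $R(u^{-1})=\delta v$. If $\delta=0$, the identity reads $0=\beta^2u$, a contradiction. If $\delta\neq0$, compare monomials: the left side is the single monomial $\beta\delta\,uv$, while the right side is $\beta^2u+\delta R(v)$ with $R(v)$ a scalar multiple of one monomial.
\begin{itemize}
\item If $v\neq1$, then $uv\neq u$, so the term $\beta\delta\,uv$ has nothing on the right to match it. Indeed $\beta^2u$ is a different monomial, and $\delta R(v)$ cannot both cancel $\beta^2u$ and produce $uv$.
\item If $v=1$, then $R(v)=R(1)=\beta u$, and the identity becomes $\beta\delta u=\beta^2u+\delta\beta u$, forcing $\beta=0$.
\end{itemize}
Either way we reach a contradiction, so $R(1)=0$. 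Substituting $a=1$ then gives $R(R(b))=0$ for all $b$, that is, $R^2=0$.

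\emph{Characteristic zero, step 2: $R=0$.} Assume $R(z)=\alpha w\neq0$ for some monomial $z$. Three consequences are available.
\begin{itemize}
\item Since $R(w)=\alpha^{-1}R(R(z))=0$, substituting $a=z$, $b=w^k$ and inducting on $k$ gives $R(w^k)=0$ for all $k\geqslant1$.
\item Putting $a=b=z$ gives $R(zw)=\tfrac{\alpha}{2}w^2\neq0$.
\item Putting $a=z$, $b=w^{-1}$ uses $R(z)w^{-1}=\alpha$ and $R(\alpha)=\alpha R(1)=0$, giving
$$
\alpha w\,R(w^{-1})=R\big(zR(w^{-1})\big).
$$
\end{itemize}

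To finish, I would copy the monomial comparison of step 1. Write $R(w^{-1})=\delta v$. If $\delta\neq0$, the last identity forces $R(zv)=\alpha w v$. Combined with $R^2=0$ (which gives $R(wv)=0$) and the nonvanishing $R(zw)\neq0$, I expect a contradiction once $v$ is pinned down by substituting $a=w^{-1}$ and $b=z$. If $\delta=0$, I would substitute $a=w^{-1}$ and $b=zw$ and use $R(zw)\neq0$ to get a contradiction.

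This final case analysis is the main obstacle. Here the Laurent structure must genuinely be used: on $F[x]$ nonzero weight-zero monomial operators exist by Theorem~\ref{RB_on_F[x]_and_F0[x]}, and the failure on $F[X,X^{-1}]$ should come from the impossibility of "integrating" the shift $w^{-1}z^{-1}$ times $w$, the analogue of $x^{-1}$. If the direct substitutions do not close the argument, I would pass to the chain $z_k=zt^k$ with $t=w/z$, on which $R$ acts like integration, and run the step-1 argument along that chain with negative powers of $t$.
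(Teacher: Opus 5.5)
Your characteristic-$p$ identity $R(a)^n=nR\bigl(R(a)^{n-1}a\bigr)$ and your Step~1 are correct. Step~2, however, carries the actual content of the theorem, and it is not proved: you stop at ``I expect a contradiction'', and the substitutions you propose cannot supply one. The algebra is commutative, so the weight-zero relation is symmetric in $a$ and $b$. Hence $a=w^{-1}$, $b=z$ just reproduces $\alpha w\,R(w^{-1})=R\bigl(zR(w^{-1})\bigr)$. When $R(w^{-1})=0$, the pair $a=w^{-1}$, $b=zw$ yields $0=R\bigl(w^{-1}\cdot\tfrac{\alpha}{2}w^2\bigr)=\tfrac{\alpha}{2}R(w)$, which you already know. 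The root cause is the choice $b=w^{-1}$. It makes $R(z)b=\alpha$ a scalar, and since $R(1)=0$ the term $R\bigl(R(z)b\bigr)$ disappears. What remains is an identity with no nonzero forcing term; for instance, it holds trivially if $R(w^{-1})=0$.

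The missing idea is to take $b=zw^{-1}$ instead. This is the literal generalization of your Step~1 substitution ($z=1$, $b=u^{-1}$). It is also the element $t^{-1}$ that your fallback with negative powers of $t=w/z$ was reaching for. Then $R(z)b=\alpha z$, so $R\bigl(R(z)b\bigr)=\alpha^2w\neq0$. Writing $R(zw^{-1})=\beta u$, the relation for $a=z$, $b=zw^{-1}$ becomes
\[
\alpha\beta\,wu=\alpha^2w+\beta R(zu).
\]
There are three cases:
\begin{itemize}
\item If $\beta=0$, this says $\alpha^2w=0$.
\item If $\beta\neq0$ and $u\neq1$, your Step~1 monomial comparison applies verbatim: the right side cannot both cancel $\alpha^2w$ and produce $wu\neq w$.
\item If $u=1$, it reads $\alpha\beta w=\alpha^2w+\alpha\beta w$.
\end{itemize}
This single substitution is essentially the paper's proof. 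It uses neither division by $2$ nor $R(1)=0$, so it works in every characteristic. That makes your separate characteristic-$p$ argument and Step~1 unnecessary, although both are valid.
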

\begin{proof}
Let $R$ be a monomial RB-operator of weight zero on $F[X, X^{-1}]$ and let $J \subseteq I$.
Suppose there exists a monomial $\prod_{i \in J} x^{a_i}_i$, $a_i \in \mathbb{Z}$, 
such that
$
R\left(\prod_{i \in J} x^{a_i}_i\right) = \alpha \prod_{i \in J} x^{b_i}_i,
$
$\alpha \in F^*$.
We also have
$
R\left(\prod_{i \in J} x^{a_i - b_i}_i\right) = \beta \prod_{i \in J} x^{c_i}_i,
$
$\beta \in F$, and by~\eqref{RBO} we obtain the following:
\begin{multline*}
R\Bigl(\prod_{i \in J} x^{a_i}_i\Bigl) 
R\Bigl(\prod_{i \in J} x^{a_i - b_i}_i\Bigl) = 
\alpha \beta \prod_{i \in J} x^{b_i}_i \cdot 
\prod_{i \in J} x^{c_i}_i \\ =
R\Bigl(\alpha \prod_{i \in J} x^{b_i}_i \cdot 
\prod_{i \in J} x^{a_i - b_i}_i + 
\beta \prod_{i \in J} x^{a_i}_i 
\prod_{i \in J} x^{c_i}_i\Bigl) =
\alpha^2 \prod_{i \in J} x^{b_i}_i + 
\beta R\Bigl(\prod_{i \in J} x^{a_i + c_i}_i\Bigl).
\end{multline*}
If $\beta = 0$, then we get a contradiction with $\alpha \neq 0$.
If $\beta \neq 0$, then 
$
\prod_{i \in J} x^{b_i + c_i}_i =
\prod_{i \in J} x^{b_i}_i
$
and $c_i = 0$ for all $i \in J$.
Hence $\beta 1\in \Imm R$, which contradicts Lemma~\ref{1InF_RB_lemma}.
\end{proof}

\begin{theorem} \label{theorem_zero_weight_comm_alt}
Let $A$ be a commutative alternative algebra over a field $F$ of characteristic~$p > 0$.
If $R$ is a Rota--Baxter operator of weight zero on $A$, then $R = 0$.
\end{theorem}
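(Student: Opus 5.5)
The plan is to reduce the statement to the vanishing of $p$-th powers of elements of $\Imm R$, via the classical power identity for weight-zero Rota--Baxter operators, and then to pass from nilpotency to vanishing. As written, that last passage needs an extra property of $A$; the final paragraph says exactly where.

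\emph{Step 1: associativity of the relevant products.} A commutative alternative algebra is power-associative. Hence for a fixed $a\in A$, all products formed from $R(a)$ and $a$ that occur below associate and commute. I will check this in each case via Artin's theorem: the subalgebra generated by two elements of an alternative algebra is associative. Here the two generators are $R(a)$ and $a$.

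\emph{Step 2: the power identity.} Using~\eqref{RBO} with $\lambda=0$, I will prove by induction on $n\geqslant 1$ that
\begin{gather*}
R(a)^n = n\,R\big(R(a)^{n-1}a\big).
\end{gather*}
The case $n=1$ is trivial. For the step, write $R(a)^{n+1}=R(a)^n R(a)$ and substitute the inductive formula for the first factor. Then apply~\eqref{RBO} to the pair $R(a)^{n-1}a$ and $a$, and reassemble the terms inside $R$ by the associativity from Step~1. This gives
\begin{gather*}
R(a)^{n+1}=n\big(R(R(a)^{n}a)+R(R(a)^{n-1}a\cdot R(a))\big)=(n+1)R\big(R(a)^na\big).
\end{gather*}
Taking $n=p=\mathrm{char}\,F$ gives $R(a)^p=0$ for every $a\in A$. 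So every element of $\Imm R$ is nilpotent.

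\emph{Step 3: from nilpotency to $R=0$.} This is the main obstacle.

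\begin{itemize}
\item \textbf{Unital case.} First put $a=1$, so that $R(1)^p=0$. Then use~\eqref{RBO} with $b=1$, namely $R(a)R(1)=R(R(a)+aR(1))$, to push the nilpotency of $R(1)$ through. Together with Lemma~\ref{1InF_RB_lemma}~b), this should force $R(1)=0$. After that, I would show that $\Imm R$ is an ideal-like subspace consisting of nilpotents, and hence zero.
\item \textbf{What this requires.} That last deduction needs $A$ to have no nonzero nilpotent elements. This holds for the algebras the paper applies the result to, such as $F[X,X^{-1}]$.
\item \textbf{Without such a hypothesis the statement fails.} If $A$ has zero multiplication, every linear map on $A$ satisfies~\eqref{RBO} with $\lambda=0$. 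So the argument must use the implicit standing assumptions on $A$ (unital, reduced). I would make these explicit at this point and finish under them.
\end{itemize}
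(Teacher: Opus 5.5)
Your Step~2 is the paper's argument in a cleaner form. The paper sets $b=R(a)$, proves $R(ab^{k})=b^{k+1}/(k+1)$ for $k\leqslant p-2$ (dividing by $2,\dots,p-1$ and treating $p=2$ separately), and arrives at $b^p=0$. Your identity $R(a)^n=nR(R(a)^{n-1}a)$ needs no division and gives $R(a)^p=0$ uniformly in $p$. The middle expression in your display is mis-transcribed: as written it equals $2nR(R(a)^na)$. It should read $R\big(nR(R(a)^{n-1}a)\,a\big)+nR\big(R(a)^{n-1}a\cdot R(a)\big)=R(R(a)^na)+nR(R(a)^na)$.

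Your diagnosis of the final step is correct, and it is exactly the gap in the paper's proof. The paper concludes with $0\neq b^2=0$ for $p=2$, and with $0\neq R(a)R(ab^{p-2})=(p-1)^{-1}b^p$ in general. Nothing in the hypotheses justifies $b^p\neq 0$, so the statement is false as written. Your zero-multiplication example works, and so does a unital one: on $F[\varepsilon]/(\varepsilon^2)$, the map $R(u+v\varepsilon)=u\varepsilon$ is a nonzero RB-operator of weight zero in every characteristic, since both sides of~\eqref{RBO} vanish. This also refutes the Corollary that follows the theorem.

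The right extra hypothesis is that $A$ has no nonzero nilpotent elements. Under it, your Step~2 finishes the proof in one line, because $R(a)^p=0$ forces $R(a)=0$.

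Drop the \emph{unital case} bullet, since unitality is neither needed nor sufficient:
\begin{enumerate}
\item In the example above $R(1)=\varepsilon$, so $R(1)^p=0$ does not give $R(1)=0$.
\item Lemma~\ref{1InF_RB_lemma}~b) is unavailable, because it presupposes $R(1)\in F$.
\item $\Imm R$ is a subalgebra, not an ideal.
\end{enumerate}
With reducedness this detour is redundant, and without it the detour fails.
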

\begin{proof}
It is a well-known fact that, according to Artin's theorem, 
an algebra~$A$ is an alternative if and only if,
for all $a, b \in A$ the subalgebra 
generated by $\{a,b\}$ is an associative algebra.

Assume $R \neq 0$. Then there exists $a \in A$
such that $R(a) = b$, $b \neq 0$.
By~\eqref{RBO} we obtain $R(a)R(a) = b^2 = 2 R(ab)$.
If $p = 2$, then we have a contradiction $0 \neq b^2 = 0$.
So, we may assume that $p \neq 2$. Therefore $R(ab) = b^2/2$.

By~\eqref{RBO} for $a$ and $ab$ we obtain the following:
$$
R(a)R(ab) 
 = b^3/2 
 = R( bab + ab^2/2) 
 = (3/2) R(ab^2).
$$

Similarly, we have found that $R(ab^{p - 2}) = (p - 1)^{-1} b^{p - 1}$.
We then reach a contradiction:
$$
0 \neq R(a)R(ab^{p - 2}) 
 = (p - 1)^{-1} b^p 
 = R( ab^{p - 1} + (p - 1)^{-1}  ab^{p - 1}) 
 = \frac{p}{p-1}R( ab^{p - 1})
 = 0.
$$
\end{proof}

Theorem~\ref{theorem_monomial_zero_weight_on_F[X,X^{-1}]} 
is significantly stronger than Theorem~\ref{theorem_zero_weight_comm_alt}, 
but it does not cover the case of a field of characteristic zero.

\begin{corollary}
Let $A$ be a commutative (associative) algebra over a field $F$ of characteristic $p > 0$.
If $R$ is a Rota--Baxter operator of weight zero on $A$, then $R = 0$.
\end{corollary}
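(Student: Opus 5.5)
The corollary follows from Theorem~\ref{theorem_zero_weight_comm_alt}. The plan is to check that every commutative associative algebra satisfies that theorem's hypotheses, and then apply it directly.

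First, associativity implies alternativity. The alternative identities $(aa)b = a(ab)$ and $(ba)a = b(aa)$ are special cases of the associative law, so $A$ is alternative. Second, $A$ is commutative by assumption. Hence $A$ is a commutative alternative algebra over a field of characteristic $p>0$. Theorem~\ref{theorem_zero_weight_comm_alt} then gives $R=0$ for every Rota--Baxter operator $R$ of weight zero on $A$.

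As a sanity check, the argument of Theorem~\ref{theorem_zero_weight_comm_alt} is even more transparent in the associative case. Suppose $R(a)=b\neq 0$.
\begin{itemize}
\item Commutativity and associativity let us rewrite $bab$ as $ab^2$, so Artin's theorem is not needed.
\item Induction using~\eqref{RBO} for the pair $a$ and $ab^{k-1}$ gives $R(ab^{k})=b^{k+1}/(k+1)$ for $1\leqslant k\leqslant p-2$.
\item At $k=p-1$ the resulting relation reads $(p-1)^{-1}b^{p}=\frac{p}{p-1}R(ab^{p-1})=0$, so $b^p=0$.
\end{itemize}

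There is one subtle point, which I regard as the only real obstacle. The final contradiction uses $b^p\neq 0$ (and $b^2\neq 0$ when $p=2$), which can fail if $b$ is nilpotent. For the argument to be complete, I would verify exactly how that nonvanishing is secured in the proof of Theorem~\ref{theorem_zero_weight_comm_alt}, for example via a domain-type assumption. Since the corollary is stated as a direct consequence, I would simply invoke the theorem. If a gap appears there, I would restrict the statement to algebras without nilpotent elements, such as $F[X,X^{-1}]$, where $b\neq0$ implies $b^p\neq 0$.
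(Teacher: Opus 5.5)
Your reduction is the paper's own: the corollary is meant to follow from Theorem~\ref{theorem_zero_weight_comm_alt}, since associative algebras are alternative. The gap is the point you flagged and then set aside. It cannot be closed by reading the theorem's proof more carefully, because the statement is false as written. The paper's proof of Theorem~\ref{theorem_zero_weight_comm_alt} asserts $0\neq b^2$ (for $p=2$) and $0\neq b^p$ without justification, and nothing in the hypotheses guarantees either.

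For a counterexample, take $A=F[\varepsilon]/(\varepsilon^2)$, which is unital, commutative and associative in any characteristic. Define $R(1)=\varepsilon$ and $R(\varepsilon)=0$. Then $R(a)R(b)\in F\varepsilon^2=0$. Also $R(a)b+aR(b)\in F\varepsilon=\ker R$. So \eqref{RBO} holds with $\lambda=0$, yet $R\neq 0$. More trivially still, on any algebra with zero multiplication every linear map is an RB-operator of weight zero. So ``simply invoking the theorem'' proves nothing: you would be citing a false result. The right move was to pursue your obstacle to its end, which leads straight to such a counterexample.

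What is true, and what your sanity check actually proves, is the statement under the extra hypothesis that $A$ has no nonzero nilpotent elements. Examples are a domain, $F[X]$, or $F[X,X^{-1}]$. In that setting your computation is a complete, self-contained proof:
\begin{itemize}
\item From $R(a)=b$, induction on \eqref{RBO} gives $R(ab^{k})=b^{k+1}/(k+1)$ for $0\leqslant k\leqslant p-2$.
\item The relation for $a$ and $ab^{p-2}$ then yields $b^p/(p-1)=\frac{p}{p-1}R(ab^{p-1})=0$.
\item Hence $b^p=0$, and reducedness forces $b=0$. For $p=2$ this is just $b^2=2R(ab)=0$.
\end{itemize}
So you should either state the corollary with this reducedness hypothesis, or record that the corollary, like Theorem~\ref{theorem_zero_weight_comm_alt}, fails as stated.
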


In particular, there exists only trivial monomial 
RB-operator of weight zero on $F[x, x^{-1}]$.
The set of all monomial RB-operators on $F[x]$ 
is described in Theorem~\ref{RB_on_F[x]_and_F0[x]} and is nontrivial.

\subsection{Nonzero weight}

Let us introduce the automorphisms $\psi_\alpha$, $\alpha \in F^*$,
and $\psi_{x, x^{-1}} \in \mathrm{Aut} (F[x, x^{-1}])$
defined by $\psi_\alpha(x^{\pm 1}) = (x / \alpha)^{\pm 1}$ 
and $\psi_{x, x^{-1}}(x^{\pm 1}) = x^{\mp 1}$. 

\begin{theorem}
Let $R \neq 0$ be a monomial Rota--Baxter operator of weight $1$ on the algebra $F[x, x^{-1}]$.
Then, up to conjugation by the automorphisms $\psi_\alpha$ and $\psi_{x, x^{-1}}$,
the operator $R$ coincides with one of the following operators:
\begin{equation*} 
R_1(x^n) = (-1)^{n + 1}, \,\, 
R_2(x^n) = -x^n, \,\, 
n \in \mathbb{Z},
\quad
R_3(x^n) = 
\begin{cases}
-x^n, & n > 0, \\
(-1)^{n + 1}\gamma^n, & n \leqslant 0,
\end{cases}
\,\,
\gamma \in F.
\end{equation*}
\end{theorem}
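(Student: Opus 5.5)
We write $R(x^n)=\alpha_n x^{\varphi(n)}$. By Lemma~\ref{RB_mainLemma_F[x,x^-1]}, $R(1)\in\{0,-1\}$. Then Lemma~\ref{1InF_RB_lemma} shows that $R$ is splitting, with $A=\Imm R\oplus\ker R$ and $R^2=-R$. Since $R$ is monomial, every monomial $x^n$ satisfies either $R(x^n)=0$, or $R(x^n)=\alpha_n x^{\varphi(n)}$ with $\alpha_n\neq0$. In the second case $R(R(x^n))=-R(x^n)$ gives $R(x^{\varphi(n)})=-x^{\varphi(n)}$. The plan is to determine the two complementary subalgebras $A_1=\ker R$ and $A_2=\Imm R$, on which $R=-\mathrm{id}$. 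Because $R$ is monomial, $\Imm R$ is spanned by monomials. The kernel, however, need not be monomial: for $R_1$ it is spanned by the elements $x^n+(-1)^{n}\cdot$const, and for $R_3$ it is spanned by $x^n-(-1)^{n+1}\gamma^n$ for $n\le0$.

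\textbf{Step 1 (image).} Let $S=\{k\in\mathbb Z : x^k\in\Imm R\}$. Since $\Imm R$ is a subalgebra (Lemma~\ref{splitting_RB_lemma}, or directly from \eqref{RBO}), $S$ is a submonoid of $\mathbb Z$. For each $n$, either $\alpha_n=0$, or $\varphi(n)\in S$ and $x^n - \alpha_n^{-1}\cdot(\text{stuff})$ lies in the kernel. More precisely, $x^n+\alpha_n x^{\varphi(n)}\cdot(-1)^{-1}\cdot\ldots$; in fact $x^n+R(x^n)\in\ker R$, because $R(x^n+R(x^n))=R(x^n)-R(x^n)=0$. Since $A=\Imm R\oplus\ker R$, the image is spanned by $\{x^k: k\in S\}$, and for $n\notin S$ we get $x^n+\alpha_nx^{\varphi(n)}\in\ker R$. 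The next step is to classify submonoids $S$ of $\mathbb Z$ for which such a splitting is possible. If $S$ is a group, then $S=d\mathbb Z$. If $S$ is not a group, then after applying $\psi_{x,x^{-1}}$ we may assume that $S$ contains a positive element and no negative element.

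\textbf{Step 2 (kernel is a subalgebra).} We use the identity $(x^n+R(x^n))(x^m+R(x^m))\in\ker R$ and compare monomials. For $n,m\notin S$ this gives
$$x^{n+m}+\alpha_nx^{\varphi(n)+m}+\alpha_mx^{n+\varphi(m)}+\alpha_n\alpha_mx^{\varphi(n)+\varphi(m)}\in\ker R.$$
Applying $R$ and using monomiality forces strong coincidences among the exponents. The main claim to establish is the following: if $\alpha_n\neq0$ for some $n\notin S$, then $\varphi(n)=0$ for every such $n$. Here Lemma~\ref{R(z)=gamma} serves as the model computation.

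The expected outcome, with $S=\{0\}$ or $S=\emptyset$ ruled out as needed, is as follows. If $S=\mathbb Z$, then $R=R_2$. If $S=\{0\}$, then $\ker R$ consists of the elements $x^n-c_n$ with $c_n$ multiplicative, which gives $R_1$ up to $\psi_\alpha$. If $S=\mathbb Z_{>0}\cup\{0\}$ or $S=\mathbb Z_{>0}$, then $\ker R$ is spanned by $x^{n}-(-1)^{n+1}\gamma^n$ for $n\le 0$, which gives $R_3$. In the last case $\gamma=0$ is allowed, and the case $R(1)=0$ gives the kernel containing $1$.

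\textbf{Step 3 (excluding other monoids).} The main obstacle is showing that $S$ is one of $\emptyset$, $\{0\}$, $\mathbb Z_{\ge0}$, $\mathbb Z_{>0}$, $\mathbb Z$, together with possibly $d\mathbb Z$-type variants that must then be excluded. For example, consider $S=2\mathbb Z$. Then the kernel must contain $x+\alpha x^{2k}$, and its square $x^2+2\alpha x^{2k+1}+\alpha^2x^{4k}$ must lie in $\ker R$. But $R$ maps this element to $-x^2+2\alpha R(x^{2k+1})-\alpha^2x^{4k}$, and monomiality forces a contradiction unless the characteristic is $2$ or the exponents collapse. I would handle this as follows. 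Take the smallest positive $s\in S$ and some $n\notin S$ with $0<n<s$ if one exists. Then derive a contradiction by computing $R$ on products $(x^n+R(x^n))^k$ until an exponent lands in $S$ with a nonzero coefficient. Special care is needed when $\operatorname{char}F=p$, where binomial coefficients may vanish; there I would use powers like $k=p$ and exploit $(x^n+c)^p=x^{np}+c^p$. Finally, the normalization $\gamma\mapsto1$ (or $\alpha$ in $R_1$) is achieved by conjugating with $\psi_\alpha$, except in $R_3$, where $\gamma=0$ must be kept separate. This explains why $\gamma$ remains a parameter in the statement.
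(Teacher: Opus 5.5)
The claim you identify as the heart of Step~2 --- that a monomial $x^n$ with $n\notin S$ is sent to $0$ or to a constant --- is false in every characteristic. So is your predicted outcome for $S=\mathbb{Z}_{>0}$. As vector spaces, $F[x,x^{-1}]=xF[x]\oplus F[x+x^{-1}]$, and both summands are subalgebras: $F[x+x^{-1}]=\Span\{1,\ x^{-n}+x^{n} : n\geqslant1\}$ is the algebra fixed by $x\mapsto x^{-1}$. By Lemma~\ref{splitting_RB_lemma}, the splitting operator $R(x^n)=-x^n$, $R(x^{-n})=x^n$ for $n\geqslant1$, $R(1)=0$, is a nonzero monomial RB-operator of weight~$1$. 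More generally one may take $R(x^{-n})=\beta^nx^n$ with $\beta\in F^*$, with kernel spanned by $1$ and the elements $x^{-n}+\beta^nx^n$. Here $S=\mathbb{Z}_{>0}$ and $-1\notin S$, yet $R(x^{-1})=x$. No operator in the list, and no conjugate of one by $\psi_\alpha$ or $\psi_{x,x^{-1}}$, sends a monomial $x^n$ to a nonzero multiple of $x^m$ with $nm<0$. Hence the list in the statement is itself incomplete. The paper's proof loses this family at the end of Case~3.2, where it uses $R(x^{b-1})=-x^{b-1}$; for $b=1$ and $R(1)=0$ one actually has $R(x^{b-1})=0$. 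In characteristic $2$ there are still more operators, e.g.\ $R(x^n)=x^{|n|}$ for all $n\in\mathbb{Z}$: its image is $F[x]$, and its kernel, spanned by the $x^{-n}+x^n$, is closed under products only because $2=0$. So your planned Frobenius argument cannot produce the stated list there. Your complement-subalgebra viewpoint is exactly what exposes these operators. The right question in Step~2 is which subalgebras complementary to $\Span\{x^k : k\in S\}$ are spanned by elements $x^n+\alpha_nx^{\varphi(n)}$, not a proof that $\varphi(n)=0$.

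Independently of this, the exclusions that carry the rest of the argument are only announced, not proved. Applying $R$ to $(x^n+R(x^n))^k$ does not by itself give a contradiction: the middle terms have exponents outside $S$, and their unknown monomial images can cancel the two extreme terms. Squaring does give $n\notin S\Rightarrow 2n\notin S$, since one middle term cannot cancel two distinct monomials. But for $S=3\mathbb{Z}$ and $R(x)=\alpha x^{3k}$, the cube only yields $-x^3+3\alpha R(x^{3k+2})+3\alpha^2R(x^{6k+1})-\alpha^3x^{9k}=0$, which by itself is satisfiable when $\mathrm{char}\,F\neq3$.

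What works in the two-sided case $S=d\mathbb{Z}$, $d\geqslant2$, is the paper's mechanism. First, $R(x^m)\notin F$ for $m\notin S$: Lemma~\ref{R(z)=gamma} conflicts with $R(x^{dm})=-x^{dm}$, and $x^m\in\ker R$ would force $x^{dm}\in\ker R$. Then apply $R$ to $(x^m+R(x^m))(x^{-m}+R(x^{-m}))\in\ker R$; for $m=1$ this is exactly \eqref{Loran_nonzero_1var_R(x)R(x^{-1})}. In the subcase where $R(x^m)R(x^{-m})$ is a constant, you must apply the same identity once more, to the pair of monomials whose images had to cancel.

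In the one-sided case, the paper restricts to the $R$-invariant subalgebra $F[x]$ and imports Theorem~\ref{R_on_k_[x]_theorem}. Your sketch has no substitute for that input, and, as shown above, what must come out of that case is larger than what you predict.
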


\begin{proof}
Throughout the proof we will use the relation~\eqref{RBO} for $x$ and $x^{-1}$:
\begin{equation} \label{Loran_nonzero_1var_R(x)R(x^{-1})}
R(x)R(x^{-1}) = R(R(x)x^{-1} + xR(x^{-1})) + R(1).
\end{equation}

By Lemmas~\ref{1InF_RB_lemma} and~\ref{RB_mainLemma_F[x,x^-1]}
we have $R(1) = \theta \in \{0,-1\}$ and $R|_{\Imm R} = -\mathrm{id}$.

We split the proof into several cases according to the action of $R$ on
$x$ and $x^{-1}$. In Case~0 we prove an auxiliary statement.
Case~1 treats the situation where one of $x$, $x^{-1}$ lies in the kernel.
The main cases, where $R(x), R(x^{-1}) \neq 0$, are dealt with in
Cases~2 and~3.

{\sc Case 0}. $R(x), R(x^{-1}) \neq 0$.
We show that if there exists a monomial $x^m \in \ker R$ and $|m| > 1$, then $R=0$.
Assume $m > 0$ and let $n \geqslant 1$ be the smallest integer such that $R(x^n)\neq 0$,
hence $R(x^s) = 0$ for all $0 \leqslant s < n$.
Thus, by~\eqref{RBO} we have a contradiction:
$$
0 = R(x^{n - 1})R(x) = R(0 + 0 + x^n) \neq 0.
$$
The same arguments works for $m < 0$, up to conjugation by $\psi_{x,x^{-1}}$. 

{\sc Case 1}. $R(x) = 0$ or $R(x^{-1}) = 0$. 
If $x, x^{-1} \in \ker R$, then $R = 0$ by Lemma~\ref{1InF_RB_lemma}.
So, we may assume $R(x) = 0$ and $R(x^{-1}) = \beta x^b$,
where $\beta \neq 0$ and $b \in \mathbb{Z}$.

{\sc Case 1.1}. If $b = 0$, then by Lemma~\eqref{R(z)=gamma}
we have $R(x^{-n}) = (-1)^{n + 1} \beta^n$.
Equation~\eqref{Loran_nonzero_1var_R(x)R(x^{-1})} implies that $R(1) = 0$.
Applying~\eqref{RBO} to $x$ and $x^{-2}$ we obtain
$$
R(x)R(x^{-2}) = 0 =
R(0 - \beta^2 x + x^{-1}) = \beta,
$$
so $\beta = 0$, a contradiction. 

{\sc Case 1.2}. If $b > 0$, 
then $R(1) = 0$ by~\eqref{Loran_nonzero_1var_R(x)R(x^{-1})}.
We prove by induction that $R(x^{-n}) = \beta^n x^{nb}$ for all $0 < n \leqslant b + 1$. 
The base case $n=1$ is evident.
Assume the formula holds for all $s \leqslant k$, 
and prove it for $k + 1 \leqslant b + 1$.
Using~\eqref{RBO},
$$
R(x^{-k})R(x^{-1}) = \beta^{k + 1} x^{(k + 1)b} =
R(\beta^k x^{kb - 1} + \beta x^{b - k} + x^{- k - 1}).
$$
Since $kb - 1 \geqslant 0$ and $b - k \geqslant 0$,
$R(x^{-(k + 1)}) = \beta^{k + 1} x^{(k + 1)b}$.
In particular, $R(x^{-(b + 1)}) = \beta^{b + 2} x^{(b + 1)b}$.
Then, we get a contradiction to the monomiality of $R$ for $x^{-(b + 2)}$ by~\eqref{RBO}:
\begin{multline*}
R(x^{-(b + 1)})R(x^{-1}) = \beta^{b+2} x^{(b + 1)b} \\ =
R(\beta^{b + 1} x^{(b + 1)b - 1} + \beta x^{-b - 1 + b} + x^{-(b + 2)}) =
0 + \beta^2 x^b + R(x^{-(b + 2)}).
\end{multline*}

{\sc Case 1.3}. Let $b < 0$.
If $b = -1$, then, up to conjugation by $\psi_{x, x^{-1}}$, 
the operator $R$ coincides with $R_3$ with $\gamma = 0$.
Now assume $b < -1$ and derive a contradiction.
From the equation~\eqref{Loran_nonzero_1var_R(x)R(x^{-1})}
we obtain $R(\beta x^{b + 1} + 1) = 0$.
At the same time $R(1) \in \{0, -1\}$ and $b + 1 < 0$,
therefore $R(x^{b + 1}) \in F$ and by Lemma~\ref{R(z)=gamma},
$R(x^{a(b + 1)}) \in F$ for every $a>0$.
On the other hand, by Lemma~\ref{1InF_RB_lemma},
$R(x^{cb}) = -x^{cb} \not \in F$ for $c > 0$.
Choosing $a = b$ and $c = b + 1$ gives a contradiction.

The proof of Case~1 holds when $R(x) \neq 0$ and $R(x^{-1}) = 0$,
up to conjugation by $\psi_{x,x^{-1}}$. 

From Cases~0 and~1 we see that the only remaining possibility is
$R(x^n) \neq 0$ for every $|n| > 0$.
Let $R(x) = \alpha x^a$ and $R(x^{-1}) = \beta x^b$, 
where $a, b \in \mathbb{Z}$ and $\alpha, \beta \neq 0$.
We consider cases according to the values of $a$ and $b$.

{\sc Case 2}. Let $a=0$, then $R(x) = \alpha$.
By Lemma~\ref{R(z)=gamma}, $R(x^n) = (-1)^{n + 1} \alpha^n$, $n > 0$.
It is easy to see that $b \leqslant 0$.
Indeed, if $b>0$, then $-x^b = R(x^b) = (-1)^{b + 1} \alpha^b$,
which is impossible.

{\sc Case 2.1}. If $b = 0$,
then $R(x^{-n}) = (-1)^{n + 1} \beta^n$, $n > 0$, by Lemma~\ref{R(z)=gamma}.
Equation~\eqref{Loran_nonzero_1var_R(x)R(x^{-1})} implies $-\alpha \beta = \theta \in \{ 0, -1\}$.
Since $\alpha, \beta \neq 0$, it follows that
$\beta = \alpha^{-1}$ and $R(1) = -1$.
Hence, up to conjugation by $\psi_\alpha$, the operator $R$
coincides with $R_1$.

{\sc Case 2.2}.
If $b < 0$, then $R(x^{-b}) = (-1)^{-b + 1} \alpha^{-b}$
and by~\eqref{RBO} we have
$$
R(x^{-1})R(x^{-b}) = (-1)^{-b + 1} \alpha^{-b} \beta x^b =
R(\beta x^{b - b} + (-1)^{-b + 1} \alpha^{-b} x^{-1} + x^{-b - 1}).
$$
Hence, $R(x^{-b - 1}) = -\beta \theta$, 
where $\theta \in \{ 0, -1\}$.
Since $x^{-b - 1} \not \in \ker R$, we conclude that $\theta = -1$.

If $b = -1$, then $\beta = -1$, since $R(x^{-1}) = -x^{-1}$.
Moreover, $R(x^{-n}) = -x^{-n}$ by Lemma~\ref{1InF_RB_lemma}.
For any $n \geqslant 0$ and $m > 0$ the relation~\eqref{RBO} holds:
$$
R(x^n)R(x^{-m}) = (-1)^{n + 2} \alpha^n x^{-m} =
R((-1)^{n + 1} \alpha^n x^{-m} - x^{n - m} + x^{n - m}) =
(-1)^{n + 2} \alpha^n x^{-m}.
$$
Thus, $R$ is a Rota--Baxter operator and, up to conjugation by
$\psi_{x,x^{-1}}$, coincides with $R_3$. 

It remains to show that if $b < -1$, 
then $R$ is not a Rota--Baxter operator.
We have already shown that $R(x^{-b - 1}) = \beta$.
Now apply~\eqref{RBO} to $x^{-b-1}$ and $x^{-1}$:
$$
R(x^{-b - 1})R(x^{-1}) = \beta^2 x^b =
R(\beta x^{-1} + \beta x^{-b - 1}x^b + x^{-b - 2}) =
2\beta^2 x^b + R(x^{-b - 2}).
$$
From the inequality $b < -1$ follows that $-b \geqslant 2$,
then $R(x^{-b - 2}) = (-1)^{-b - 1} \alpha^{-b - 2}$
and we have a contradiction
$-\beta^2 x^b= R(x^{-b - 2}) = (-1)^{-b - 1} \alpha^{-b - 2}$.

{\sc Case 3}. If $a \neq 0$ and $b = 0$, 
then this case is similar to
Case~2.2, up to conjugation by~$\psi_{x,x^{-1}}$.
It remains to consider the case where both $a$ and $b$ are nonzero.

{\sc Case 3.1}.
Assume that there exist minimal integers $p, q > 0$,
such that $x^p, x^{-q} \in \Imm R$.
If $p \neq q$, then $p - q < p$ and $p - q > - q$,
which contradicts either the minimality of $p$ or that of $q$.
Hence $p=q$.
Applying~\eqref{RBO} to $x^p$ and $x^{-p}$, we obtain $R(1) = -1$:
$$
R(x^p)R(x^{-p}) = 1 = R(-1 - 1 + 1) = -R(1).
$$

If $p=1$, then $R$ coincides with $R_2$.

Assume that $p > 1$.
We show that $\Imm R \cap M(\{x\}) = \{ x^{sp} \mid s \in \mathbb{Z}\}$.
The inclusion $\{ x^{sp} \mid s \in \mathbb{Z}\} \subseteq \Imm R$
is immediate from Lemma~\ref{1InF_RB_lemma}.
Consider the reverse inclusion.
Suppose, to the contrary, that $x^{cp + b} \in \Imm R$,
$c \in \mathbb{Z}$, $0 \leqslant b < p$.
Then $x^{(c + s)p + b} \in \Imm R$ for any $s \in \mathbb{Z}$,
which contradicts the minimality of $p$ when $s = -c$.
If for some $n \in \mathbb{Z} \setminus \{0\}$ we have $R(x^n) \in F^*$,
then $R(x^{sn}) \in F^*$ for all $s > 0$ by Lemma~\ref{R(z)=gamma}.
This is impossible, since these conditions 
imply the contradictory equality $-x^{np} = R(x^{np}) \in F^*$.
Therefore, $a = up$ and $b = vp$ for some $u, v \in \mathbb{Z}$.
Equation~\eqref{Loran_nonzero_1var_R(x)R(x^{-1})} becomes
$$
\alpha \beta x^{(u + v)p} =
\alpha R(x^{up - 1}) + \beta R(x^{vp + 1}) - 1.
$$
Consequently, either $R(x^{up - 1}) \in F^*$ or $R(x^{vp + 1}) \in F^*$,
which is impossible by the arguments above.

{\sc Case 3.2}.
It remains to consider the case where there exists 
a minimal integer $p > 0$ such that either $x^p \in \Imm R$ or $x^{-p} \in \Imm R$.
Equivalently, $R(x^n) = \gamma_n x^{c_n}$ and for every $n \in \mathbb{Z}$ 
either $c_n \geqslant 0$ or $c_n \leqslant 0$, $\gamma_n \in F^*$.

Assume that $R(x^p) = -x^p$.
The case $p < 0$ is similar.
By the observation made earlier,
the subalgebra $F[x]$ is $R$-invariant.
So, we may apply Theorem~\ref{R_on_k_[x]_theorem}
and, according to the restrictions already established for $R$, we obtain
$$
R|_{F[x]} =
\begin{cases}
-x^n , & n > 0, \\
\theta, & n = 0,
\end{cases} 
\,\, \theta \in \{ 0, -1\}.
$$ 
Thus, $a = 1$ and $\alpha = \gamma_0 = -1$. 
Finally, substituting $x^{-1}$ and $x^{-1}$ into~\eqref{RBO},
we arrive at a contradiction to the monomiality of $R$ for $x^{-2}$:
$$
R(x^{-1})R(x^{-1}) = \beta^2 x^{2b} = 
2\beta R(x^{b - 1}) + R(x^{-2}),
$$
since $b > 0$, and therefore $R(x^{b - 1}) = -x^{b - 1}$.
\end{proof}

\begin{remark}
Note that the classification of monomial Rota--Baxter operators 
of nonzero weight on $F[x]$ and $F_0[x]$ continues to hold in the case $\mathrm{char}F \neq 2$.
The proof of Theorem~2 in~\cite{MonomNonunital} carries over without any changes.
However, in the case when $\mathrm{char}F = 2$, the classification becomes more complex.
In this case, the proof of the theorem no longer applies, 
as the inductive argument fails in the induction step of Case~2B.
\end{remark}

\section{Averaging operators}

\begin{theorem}\label{Averaging_on_F[x,x^-1]}
Let $T$ be a monomial averaging operator on $F[x, x^{-1}]$.
Then up to conjugation $\psi_{x, x^{-1}}$ either there exist $0 < m$, $p_i \in \mathbb{Z}$ and $q_i \in F$, 
$0 \leqslant i < m$, such that $T$ has the form
$$
T_1(x^{m a + b})= q_b x^{m(a + p_b)}, \,\, a \in \mathbb{Z}, \,\, 0 \leqslant b < m,
$$
or there exist $q_n \in F$, $n \in \mathbb{Z}$, 
such that $T$ has the form $T_2(x^n) = q_n$.
\end{theorem}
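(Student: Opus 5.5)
Write $T(x^n)=q_n x^{c_n}$ with $q_n\in F$; set $c_n$ undefined when $q_n=0$. The plan is to study the monomial part of $\Imm T$, namely
$$S=\{c_n \mid q_n\neq 0\}\subseteq\mathbb{Z},$$
and to use Lemma~\ref{closure_AvOp_lemma} together with the averaging identity specialized to monomials.

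\textbf{Step 1 (closure of $S$).} By Lemma~\ref{closure_AvOp_lemma}\,a), $\Imm T$ is a subalgebra spanned by the monomials $x^{c}$, $c\in S$, since $T$ is monomial. Hence $S\cup\{0\}$ is closed under addition whenever $S\neq\emptyset$. For $T=0$ there is nothing to prove: it is $T_2$ with all $q_n=0$.

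\textbf{Step 2 (projection identity).} For $x^c\in\Imm T$, say $x^c=T(x^n)/q_n$, the relation $T(T(a)b)=T(a)T(b)$ with $a=x^n$ and $b=x^k$ gives
$$T(x^{c+k})=x^cT(x^k)\quad\text{for all } k\in\mathbb{Z}.$$
This yields two consequences. First, $q_{c+k}=q_k$ and $c_{c+k}=c+c_k$ for $c\in S$. Second, taking $k=0$ gives $T(x^c)=q_0x^{c+c_0}$.

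\textbf{Step 3 (case split on $S$).} Let $G$ be the subgroup of $\mathbb{Z}$ generated by $S$, and write $G=m\mathbb{Z}$.

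If $S=\{0\}$, then $\Imm T\subseteq F$, so $T(x^n)=q_n$. This is $T_2$, and one checks directly that such a $T$ is averaging. Indeed $T(x^n)T(x^k)=q_nq_k$ and $T(q_nx^k)=q_nq_k$.

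If $S\neq\{0\}$, take $c\in S\setminus\{0\}$. By Step 2, $T(x^{n+c})=x^cT(x^n)$, so $T$ commutes with multiplication by $x^c$. Moreover $q_n$ depends only on $n$ modulo $c$. After conjugating by $\psi_{x,x^{-1}}$ if needed, assume $S$ contains a positive element.

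The main obstacle is to show that $S$ contains both positive and negative elements, which would make $S\cup\{0\}=G=m\mathbb{Z}$. The first thing I would try is the identity $T(x^{c+k})=x^cT(x^k)$ with $k=-c$. It gives $T(1)=x^cT(x^{-c})$, and hence $q_0=q_{-c}$ and $c_0=c+c_{-c}$.

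If $q_0\neq0$, then $c_0\in S$ and $c_0+S\subseteq S$. Applying $T(x^{c}T(1))=T(x^c)T(1)$ gives $2c_0=c_0$ for the exponent, so $c_0=0$ and $T(1)=q_0$. In that case $c_{-c}=-c$, so $-c\in S$.

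If $q_0=0$, then $q_n=0$ for every $n\in S$, since $q$ is $c$-periodic and the elements of $S$ lie in the appropriate residue class after adding $c$'s. This contradicts $T(x^c)=q_0x^{c+c_0}$ being equal to the needed value only if $x^c$ itself has a preimage with nonzero coefficient. Here I would use the other identity $T(x^nT(x^k))=T(x^n)T(x^k)$ to force $q_{c}\neq0$ for $c\in S$, that is, to show that $T$ does not vanish on $\Imm T$. Once $T(x^n)=q_nx^{c_n}\neq0$, that identity with $k=n$ gives $T(x^{n+c_n})=q_nx^{2c_n}$.

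\textbf{Step 4 (conclusion).} Once $S\cup\{0\}=m\mathbb{Z}$ and $T(x^{n+c})=x^cT(x^n)$ for all $c\in m\mathbb{Z}$, write $n=ma+b$ with $0\le b<m$. Then
$$T(x^{ma+b})=x^{ma}T(x^b)=q_bx^{ma+c_b},$$
where $c_b\in m\mathbb{Z}$, say $c_b=mp_b$, or $q_b=0$. This is exactly $T_1$. The final check is that every such $T_1$ is averaging. This is direct, since $T$ is $F[x^m,x^{-m}]$-linear with image in $F[x^m,x^{-m}]$.
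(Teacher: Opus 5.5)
There is a genuine gap in Step~3, at the point you yourself flag as the main obstacle: showing that $S$ has elements of both signs, so that $S\cup\{0\}=m\mathbb{Z}$. Neither branch of your case split on $q_0$ works.

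In the branch $q_0\neq 0$, the relation $T(x^cT(1))=T(x^c)T(1)$ carries no information, because by your own Step~2 both sides equal $q_0^2x^{c+2c_0}$. The conclusion $c_0=0$ is in fact false: $T(a)=xa$ is a monomial averaging operator with $T(1)=x$.

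In the branch $q_0=0$, you propose to show that $T$ does not vanish on $\Imm T$, and this is also false. Take $T(x^{2a})=0$, $T(x^{2a+1})=x^{2a}$. It is averaging (it is $T_1$ with $m=2$, $q_0=0$, $q_1=1$, $p_1=0$), its image is $F[x^2,x^{-2}]$, and $T$ kills that image. So no contradiction is available there. Step~4, which presupposes $S\cup\{0\}=m\mathbb{Z}$, is therefore left unsupported.

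The missing idea is that $x^c$ is invertible in $F[x,x^{-1}]$. Your identity $T(x^{k+c})=x^cT(x^k)$ holds for every $k\in\mathbb{Z}$, so replacing $k$ by $k-c$ gives $T(x^{k-c})=x^{-c}T(x^k)$. If $T(x^n)=q_nx^c$ with $q_n\neq 0$, then $T(x^{n-2c})=x^{-2c}T(x^n)=q_nx^{-c}$, hence $-c\in S$. So $S$ is closed under addition and negation, i.e.\ it is a subgroup $m\mathbb{Z}$ (in particular $0\in S$). No case split on $q_0$ and no conjugation by $\psi_{x,x^{-1}}$ is needed, and Step~4 goes through verbatim.

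With this patch your route is considerably shorter than the paper's. Yours rests only on the $\Imm T$-linearity $T(ub)=uT(b)$ for $u\in\Imm T$. The paper's uses $m=\gcd$ of the image exponents, a threshold $N$ beyond which $x^{mk}\in\Imm T$, and the power identity $(T(z))^{k+1}=T((T(z))^kz)$. It also needs a separate argument propagating kernel membership along residue classes mod $m$.
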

\begin{proof}
By monomiality, any averaging operator on $F[x, x^{-1}]$
is defined by the rule $T(x^n) = \alpha_n x^{t_n}$,
$\alpha_n \in F$, $t_n \in \mathbb{Z}$, for all $n \in \mathbb{Z}$.
It is evident that if $\Imm T \subseteq F$, then $T = T_2$.
So, we assume that $\Imm T \not\subseteq F$.

It is straightforward to verify that 
the following formula holds for any $z \in M(\{x\})$:
\begin{equation}\label{xn^k+1=xn^k_and_xn}
(T(z))^{k + 1} = T((T(z))^k z), \quad k \geqslant 0.
\end{equation} 

There exists $s \in \mathbb{Z}$ such that $T(x^s) \not \in F$, 
since $\Imm T \not\subseteq F$.
Set $m = \gcd (\{s \mid x^s \in \Imm R\})$, we may assume that $m > 0$.
If $x^a \in \Imm T$, then $m \mid a$ and by Lemma~\ref{closure_AvOp_lemma}
there exists $N \geqslant 0$ such that 
$\Span (\{x^{m k} \mid |k| \geqslant N \}) \subseteq \Imm T$.
The case $M = \Imm T \cap \{ x^k \mid k < 0\} = \emptyset$ 
is similar to the case $\Imm T \cap \{ x^k \mid k > 0\} = \emptyset$
up to conjugation $\psi_{x, x^{-1}}$.
Thus, we have two cases:
in Case~1, every $x^a \in \Imm T$ satisfies $a \geqslant 0$,
in Case~2, $a \in \mathbb{Z}$ in case 2.

We show that if $T(x^{ma + b}) = 0$ for any $a \in \mathbb{Z}$ and $0 \leqslant b < m$,
then $T(x^{mc + b}) = 0$ for all $c \in \mathbb{Z}$.
By the choice of $N$ and Lemma~\ref{closure_AvOp_lemma},
the subspace $\ker T$ is an $\Imm T$-bimodule.
Hence, $T(x^{mc + b}) = 0$ for all $c \geqslant a + N$ in Case~1
and for all $|c| \geqslant |a| + N$ in Case~2. 

Consider $c < a + N$ in Case~1 and $|c| < |a| + N$ in Case~2.
Suppose, to the contrary, that $T(x^{mc + b}) = \gamma x^{md}$, 
where $\gamma \neq 0$ and $d \in \mathbb{Z} \setminus \{ 0\}$.
By~\eqref{xn^k+1=xn^k_and_xn} we obtain 
\begin{equation}\label{Averaging_on_F[x,x^-1]_case_ker_eq}
\gamma^{k + 1} x^{md(k + 1)}  = (T(x^{mc + b}))^{k + 1} = 
T((T(x^{mc + b}))^k x^{mc + b}) = T(\gamma^k x^{m(dk + c) + b}).
\end{equation}
In Case~1, we have $a \geqslant 0$ and $d > 0$.
We may choose $k \geqslant 0$ such that $dk + c \geqslant N + a$.
Thus, in~\eqref{Averaging_on_F[x,x^-1]_case_ker_eq} we have
$0 \neq \gamma^{k + 1} x^{md(k + 1)} = 0$, a contradiction.
In Case~2 we also have a contradiction,
since we may choose $k \geqslant 0$ 
such that $dk + c \geqslant N + |a|$ if $d > 0$,
or $dk + c \leqslant - N - |a|$ if $d < 0$.

It remains to consider the case $d = 0$,
i.\,e. $T(x^{mc + b}) = \gamma \in F^*$.
For every integer $|l| \geqslant N$, 
there exists $d \in \mathbb{Z}$ such that $T(x^d) = \varepsilon x^{ml}$ and $\varepsilon \neq 0$.
Substituting $x^d$ and $x^{mc + b}$ into~\eqref{Averaging},
we obtain
$$
0 \neq
\varepsilon \gamma x^{ml} =
\gamma T(x^d) =
T(x^d T(x^{mc + b})) =
T(T(x^d)x^{mc + b}) =
\varepsilon T(x^{m(c + l) + b}).
$$
In both Cases~1 and~2, we have $T(x^{m(c + l) + b}) = 0$
when $|c + l| \geqslant |a| + N$.
It is obvious, that we may always choose $l$ 
such that this inequality holds,
which yields a contradiction.

Now let $0 \leqslant b < m$ be such that $T(x^b) = q_b x^{mt_b}$, 
where $q_b \neq 0$ and $t_b \in \mathbb{Z}$.
We prove that $T(x^{mc + b}) = q_b x^{m(c + t_b)}$ for all $c \in \mathbb{Z}$.
Consider $c \geqslant N$.
There exists $d \in \mathbb{Z}$ such that $T(x^d) = \gamma x^{mc}$ 
with $\gamma \neq 0$, due to $x^{mc} \in \Imm T$.
Applying~\eqref{Averaging}, we obtain
$$
q_b\gamma x^{m(c + t_b)} = T(x^b)T(x^d) = T(x^b T(x^d)) = \gamma T(x^{mc + b}).
$$
Now assume $c < N$ and $c \neq 0$.
Then $T(x^{mc + b}) = q_{mc + b} x^{m r}$, where $r \in \mathbb{Z}$
and $q_{mc + b} \neq 0$, since otherwise $T(x^b) = 0$.
The following expressions coincide, by~\eqref{Averaging}:
\begin{gather} \label{Averaging_on_F[x]_and_F0[x]_last_eq}
\begin{gathered}
T(x^b)T(x^{mc + b}) = q_bq_{mc + b} x^{m(t_b + r)}, \quad 
T(T(x^b)x^{mc + b}) = q_b T(x^{m(c + t_b) + b}), \\
T(x^b T(x^{mc + b})) = q_{mc + b} T(x^{mr + b}). 
\end{gathered}
\end{gather}
Thus, $T(x^{m(c + t_b) + b}) = q_{mc + b} x^{m(t_b + r)}$ 
and $T(x^{mr + b}) = q_b x^{m(t_b + r)}$.
If $r = c + t_b$, then the desired formula follows immediately.
Assume that $r \neq c + t_b$.
By~\eqref{Averaging_on_F[x]_and_F0[x]_last_eq}, 
$q_bx^{m(c + t_b) + b} - q_{mc + b}x^{mr + b} \in \ker T$.
Using Lemma~\ref{closure_AvOp_lemma} after multiplying 
this polynomial by $x^{mL} \in \Imm T$, $L \geqslant N$,
we again obtain an element of $\ker T$.
Choose $L$ such that $c + t_b + L \geqslant N$ and $r + L \geqslant N$ both hold.
Therefore, we obtain the equation
$$
0 = T(q_bx^{m(c + t_b + L) + b} - q_{mc + b}x^{m(r + L) + b}) 
 = q_b^2 x^{m(c + 2t_b + L)} - q_{mc + b}q_b x^{m(r + t_b + L)}.
$$
Since $q_b\neq 0$ and $q_{mc + b} \neq 0$, 
it follows that $m(c + 2t_b + L) = m(r + t_b + L)$,
a contradiction.

We show that only Case~2 can occur, and the operator $T$ is completely determined by
its values on the monomials $x^b$, $0 \leqslant b < m$.
It is easy to see that $T$ coincides with the operator~$T_1$.
\end{proof}

Comparing the classifications of monomial averaging operators on $F_0[x]$ and $F[x]$ (see~\cite{Khodzitskii2}) 
and $F[x, x^{-1}]$, it can be observed that they are defined by the same formulas.

\section*{Acknowledgements}

The author express his gratitude to his supervisor, Vsevolod Gubarev.

The study was supported by a grant from the Russian Science Foundation \linebreak
\textnumero 25-41-00005

\end{document}